\newtheorem{thm}{Theorem}[section]
\newtheorem{lem}[thm]{Lemma}
\newcommand{\C}{{\mathbb C}}
\newcommand{\D}{{\mathbb D}}
\newcommand{\R}{{\mathbb R}}
\newcommand{\T}{{\mathbb T}}
\newcommand{\N}{{\mathbb N}}
\newcommand{\f}{\frac}
\newcommand{\ov}{\overline}
\newcommand{\ga}{\gamma}
\newcommand{\la}{\lambda}
\newcommand{\ze}{\zeta}
\renewcommand{\th}{\theta}
\newcommand{\si}{\sigma}
\newcommand{\ph}{\varphi}
\newcommand{\Om}{\Omega}
\numberwithin{equation}{section}
\title[Quasi-squares of pseudocontinuable functions]
{Quasi-squares of pseudocontinuable functions}
\author{Konstantin M. Dyakonov}
\address{Departament de Matem\`atiques i Inform\`atica, IMUB, BGSMath, Universitat de Barcelona, Gran Via 585, E-08007 Barcelona, Spain}
\address{ICREA, Pg. Llu\'is Companys 23, E-08010 Barcelona, Spain}
\email{konstantin.dyakonov@icrea.cat}
\keywords{Hardy space, inner function, star-invariant subspace, quasi-square, extrapolation}
\subjclass[2010]{30H10, 30J05, 47A15, 47B35} 
\thanks{Supported in part by grant MTM2017-83499-P from El Ministerio de Econom\'ia y Competitividad (Spain) and grant 2017-SGR-358 from AGAUR (Generalitat de Catalunya).}
\begin{document}
\begin{abstract}
For an inner function $\th$ on the unit disk, let $K^p_\th:=H^p\cap\th\ov{H^p_0}$ be the associated star-invariant subspace of the Hardy space $H^p$. While the squaring operation $f\mapsto f^2$ maps $H^p$ into $H^{p/2}$, one cannot expect the square $f^2$ of a function $f\in K^p_\th$ to lie in $K^{p/2}_\th$. (Suffice it to note that if $f$ is a polynomial of degree $n$, then $f^2$ has degree $2n$ rather than $n$.) However, we come up with a certain \lq\lq quasi-squaring" procedure that does not have this defect. As an application, we prove an extrapolation theorem for a class of sublinear operators acting on $K^p_\th$ spaces. 
\end{abstract}

\maketitle

\section{Introduction}

Let $\T$ stand for the circle $\{\ze\in\C:|\ze|=1\}$ and $m$ for the normalized Lebesgue measure on $\T$; thus, $dm(\ze)=|d\ze|/(2\pi)$. The spaces $L^p:=L^p(\T,m)$ are then defined in the usual way and equipped with the standard norm $\|\cdot\|_p$. 
Also, for a nonnegative integer $n$, we let $\mathcal P_n$ denote the space of polynomials (in one complex variable) of degree at most $n$. 

\par Consider the following problem, stated somewhat vaguely for the time being: Given $f\in\mathcal P_n$ (with $n\in\N$), find a polynomial $g\in\mathcal P_n$ that mimics $f^2$ in the sense that $|g|$ and $|f|^2$ have the same order of magnitude on $\T$. The exact meaning of this has yet to be specified, but once that is done, we would want our \lq\lq quasi-squaring" procedure (leading from $f$ to $g$) to be fairly explicit and applicable to all $f$ in $\mathcal P_n$. 

\par Since $f^2\in\mathcal P_{2n}$, whereas $g$ is required to be in $\mathcal P_n$, there are, of course, limits to what can be expected. In particular, if $f$ has precisely $n$ zeros on $\T$, then no $g\in\mathcal P_n$ will satisfy $|g|=|f|^2$ on $\T$; nor can we hope for a two-sided estimate of the form 
\begin{equation}\label{eqn:twosidedfgont}
|f(\ze)|^2\le|g(\ze)|\le C|f(\ze)|^2,\qquad\ze\in\T,
\end{equation}
to hold with a constant $C>0$, because the right-hand inequality alone would force $g$ to be null. It turns out, however, that a slightly weaker property can be achieved. To arrive at it, we replace the problematic inequality $|g|\le C|f|^2$ in \eqref{eqn:twosidedfgont} by its $L^p$ (or rather $L^{p/2}$) version
\begin{equation}\label{eqn:rhlp}
\|g\|_{p/2}\le C\|f\|_p^2, 
\end{equation}
while leaving the other (pointwise) inequality 
\begin{equation}\label{eqn:lhineq}
|g(\ze)|\ge|f(\ze)|^2
\end{equation}
as it stands. Here, the admissible values of $p$ are those with $2<p<\infty$, as we shall see, and the constant $C=C(p)$ in \eqref{eqn:rhlp} is allowed to depend only on $p$. 
\par There is no chance \eqref{eqn:rhlp} could hold with $C=1$, as long as \eqref{eqn:lhineq} is also to be fulfilled, since otherwise it would follow that $|g|=|f|^2$ on $\T$, a condition we have already discarded as unrealistic. At the same time, our results imply the amusing fact that, for $p$ as above, the two inequalities become compatible when $C=C(p)$ is suitably large. 
\par In fact, the polynomial case hitherto discussed---and intended as a prologue---is but a toy version of the more general situation to be dealt with, the context being that of pseudocontinuable functions in Hardy spaces. Recall, to begin with, that the {\it Hardy space} $H^p$ (with $0<p<\infty$) consists of all holomorphic functions $f$ on the disk $\D:=\{z\in\C:|z|<1\}$ that satisfy
$$\sup_{0<r<1}\int_\T|f(r\ze)|^pdm(\ze)<\infty,$$
while $H^\infty$ denotes the algebra of bounded holomorphic functions on $\D$. As usual, $H^p$ functions are identified with their boundary traces on $\T$, defined in the sense of nontangential convergence almost everywhere (cf. \cite[Chapter II]{G}), and $H^p$ is viewed as a subspace of $L^p$. Recall also that a function $\th\in H^\infty$ is said to be {\it inner} if $|\th|=1$ a.e. on $\T$. We use the notation $\mathcal I$ for the set of nonconstant inner functions, and $\mathcal I_0$ for the set of inner functions $\th$ with $\th(0)=0$. 

\par Now, for $\th\in\mathcal I$, the associated {\it star-invariant} (or {\it model}) {\it subspace} $K^p_\th$ is defined by 
\begin{equation}\label{eqn:defnkpth}
K^p_\th:=H^p\cap\th\,\ov{H^p_0},\qquad1\le p\le\infty,
\end{equation}
where $H^p_0:=zH^p=\{f\in H^p:f(0)=0\}$ and the bar denotes complex conjugation. Equivalently, we have
$$K^p_\th=\{f\in H^p:\,\ov z\ov f\th\in H^p\},$$
with the understanding that the product $\ov z\ov f\th$ (and each of the three factors involved) is regarded as living a.e. on $\T$. It is well known that each $K^p_\th$ is invariant under the {\it backward shift operator}
$$\mathfrak B:f\mapsto\f{f-f(0)}z,$$ 
and conversely, every closed and nontrivial $\mathfrak B$-invariant subspace in $H^p$, with $1\le p<\infty$, is of the form $K^p_\th$ for some $\th\in\mathcal I$; see, e.g., \cite{DSS, N}. The functions belonging to some such subspace---i.e., those that are noncyclic for the backward shift---are known as {\it pseudocontinuable} functions, since they are indeed characterized by a certain \lq\lq pseudocontinuation" property. Namely, the function in question must agree a.e. on $\T$ with the boundary values of some meromorphic function of bounded characteristic in $\C\setminus(\D\cup\T)$; see \cite{DSS} for details. 

\par It is in the $K^p_\th$ setting that we actually consider our quasi-squaring problem. (The polynomial version, as discussed previously, is recovered by taking $\th(z)=z^{n+1}$, in which case $K^p_\th$ reduces to $\mathcal P_n$.) Observe, first of all, that if $\th\in\mathcal I$ and $p\ge2$, then for any function $f\in K^p_\th$, its 
square $f^2$ (and in fact the product $zf^2$) will belong to $K^{p/2}_{\th^2}$; this is essentially the best we can say of it. Passing from $p$ to $p/2$ does not really bother us---after all, this is what happens when squaring an $H^p$ function---but passing from $\th$ to $\th^2$ is what we want to avoid. Rather, we insist on keeping $\th$ intact. Accordingly, given an $f\in K^p_\th$, we seek to replace the true square, $f^2$, by a suitable \lq\lq quasi-square" $g\in K^{p/2}_\th$ for which $|g|$ is approximately of the same size as $|f|^2$ on $\T$. Precisely speaking, the properties our quasi-square should enjoy are \eqref{eqn:rhlp}, with a certain $C=C(p)$, and \eqref{eqn:lhineq}; the latter should hold for almost all $\ze\in\T$. We then prove that such a quasi-square can indeed be constructed, provided that $2<p<\infty$. 

\par In order to describe our findings more accurately, we now introduce a bit of terminology. Suppose $\mathcal E_1$ and $\mathcal E_2$ are two vector spaces consisting of functions that are defined---possibly a.e. with respect to a certain measure---on a set $\mathcal X$. A (nonlinear) operator $S:\mathcal E_1\to\mathcal E_2$ will be called {\it superquadratic} if it has the properties that 
\begin{equation}\label{eqn:defsuperone}
|S(\la f)|=|\la|^2|Sf|
\end{equation}
and 
\begin{equation}\label{eqn:defsupertwo}
|Sf|\ge|f|^2
\end{equation}
whenever $f\in\mathcal E_1$ and $\la\in\C$; the two conditions should hold everywhere---or almost everywhere---on $\mathcal X$, depending on the context. 
\par In what follows, the role of $\mathcal X$ will be played by either $\D$ or $\T$, with the \lq\lq everywhere" or \lq\lq almost everywhere" interpretation, respectively. In fact, for the spaces considered, either choice of $\mathcal X$ will do. 

\par Our main result admits a neater formulation when the underlying class of inner functions is taken to be $\mathcal I_0$, and we now state the restricted version that arises. Namely, {\it there exists a superquadratic map $S$ from $H^2$ to the weak Hardy space $H^1_w$} (a space slightly larger than $H^1$, to be defined in Section 3 below) {\it for which the following holds whenever $2<p<\infty$:}
$$S\left(K^p_\th\right)\subset K^{p/2}_\th\,\,\text{\it for each }\th\in\mathcal I_0,\qquad S\left(H^p\right)\subset H^{p/2},$$
{\it and}
$$\|Sf\|_{p/2}\le B_p\|f\|_p^2\quad\text{\it for all }f\in H^p,$$
{\it where $B_p$ is a certain} (explicit) {\it constant depending only on $p$}. In particular, if $f\in K^p_\th$ with $2<p<\infty$ and $\th\in\mathcal I_0$, then the function $g:=Sf$ is eligible as a quasi-square for $f$, since it belongs to $K^{p/2}_\th$ and has the required properties \eqref{eqn:rhlp} and \eqref{eqn:lhineq}. 

\par In addition, our construction will ensure that the image $Sf$ of every $f\in H^2\setminus\{0\}$ is an outer function. (By definition, a zero-free holomorphic function $F$ on $\D$ is {\it outer} if $\log|F|$ agrees with the harmonic extension of an $L^1$ function on $\T$.) Consequently, in our case it makes no difference whether \eqref{eqn:defsupertwo} is supposed to hold a.e. on $\T$ or everywhere on $\D$, the two conditions being equivalent. 

\par Our method relies on a preliminary result that describes the real parts of functions in $K^p_\th$. The description, which may be of independent interest, is given in Section 2 along with another auxiliary fact, to be leaned upon later. In Section 3, we state and prove our main theorem in its entirety. This includes a more complete version of the above statement involving the class $\mathcal I_0$, plus its counterpart dealing with the case of a generic $\th\in\mathcal I$. In Section 4, we discuss the endpoint values of $p$ in our quasi-squaring theorem, the emphasis being on the case $p=2$, where everything breaks down dramatically. In Section 5, we apply our quasi-squaring technique to derive an amusing extrapolation theorem for a class of sublinear operators acting on $K^p_\th$ spaces. To be more precise, we prove that if $1<p_0<\infty$ and $1\le q_0\le\infty$, and if $T$ is an operator satisfying certain hypotheses that maps $K^{p_0}_\th$ boundedly into $L^{q_0}(\mu)$ for some measure $\mu$, then $T$ is also bounded as an operator from $K^p_\th$ to $L^q(\mu)$, provided that the exponents involved are related by $p/q=p_0/q_0$ and $p_0<p<\infty$. Finally, this last theorem is discussed at some length in Section 6. In particular, we point out that our result extends a theorem of Aleksandrov from \cite{A1}, where a similar extrapolation property was established in the context of Carleson-type measures for $K^p_\th$. 

\par We conclude this introduction by looking back at the case of $\mathcal P_n$ and asking a question that puzzles us: Does our quasi-squaring construction carry over, in some form or other, to polynomials---or special classes of polynomials---in several real or complex variables?

\section{Preliminaries}

Given a function class $X$ on $\T$, we write $\text{\rm Re}\,X$ for the set of those (real-valued) functions $u$ on $\T$ that have the form $u=\text{\rm Re}\,f$ for some $f\in X$. 
\par Our current purpose is to characterize the functions $u$ in $\text{\rm Re}\,K^p_\th$ with $1\le p\le\infty$. An obvious necessary condition to be imposed is that $u\in\text{\rm Re}\,H^p$. When $1<p<\infty$, the latter simply means that $u\in L^p_\R$ (where $L^p_\R$ is the set of real-valued functions in $L^p$), the equivalence between the two conditions being due to the M. Riesz theorem; see \cite[Chapter III]{G}. For $p=1$, the assumption that $u\in\text{\rm Re}\,H^1$ can be rephrased by saying that $u$ and its nontangential maximal function 
$$u^*(\ze):=\sup\{|\mathcal Pu(z)|:\,z\in\D,\,|z-\ze|\le2(1-|z|)\},\qquad\ze\in\T$$
(where $\mathcal Pu$ is the Poisson integral of $u$), are both in $L^1_\R$; the underlying result can also be found in \cite[Chapter III]{G}. 

\begin{thm}\label{thm:realparts} Let $u\in\text{\rm Re}\,H^p$, where $1\le p\le\infty$. 
\par {\rm (a)} Suppose that $\th\in\mathcal I_0$. Then $u\in\text{\rm Re}\,K^p_\th$ if and only if $\ov zu\th\in H^p$. 
\par {\rm (b)} Suppose that $\th\in\mathcal I\setminus\mathcal I_0$. Then $u\in\text{\rm Re}\,K^p_\th$ if and only if the following two conditions hold:
\begin{equation}\label{eqn:twocond}
u\th\in H^p\quad\text{and}\quad\int_\T u\left(\frac{\th}{\th(0)}-\frac12\right)dm\in i\R.
\end{equation}
\end{thm}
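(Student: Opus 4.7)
The plan is to use the equivalent description $K^p_\th = \{f \in H^p : \bar z \bar f \th \in H^p\}$ (quoted in the introduction) together with the identity $\bar f = 2u - f$ on $\T$, which converts the $K^p_\th$-membership condition into a Fourier-coefficient condition on the single product $u\th$.

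Fix $u \in \Re H^p$ and any $f_0 \in H^p$ with $\Re f_0 = u$; every $f \in H^p$ with $\Re f = u$ is then of the form $f_0 + ic$ for some $c \in \R$. Substituting $\bar f = 2u - f$ on $\T$ yields
\[
\bar z \bar f \th = 2\bar z u \th - \bar z f \th,
\]
so that $\widehat{\bar z \bar f \th}(n) = 2\widehat{u\th}(n+1) - \widehat{f\th}(n+1)$ for every $n \in \Z$. Since $f\th \in H^p$ forces $\widehat{f\th}(k) = 0$ for $k \le -1$, the requirement $\bar z \bar f \th \in H^p$ splits cleanly into two parts: for $n \le -2$ I need $\widehat{u\th}(n+1) = 0$, which amounts to $u\th \in H^p$; and at $n = -1$ I need the single scalar identity
\[
2\int_\T u\th\, dm = f(0)\,\th(0).
\]

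In case (a), where $\th(0)=0$, this identity collapses to $\int_\T u\th\, dm = 0$, which, combined with $u\th \in H^p$, is precisely $u\th \in H^p_0 = zH^p$, equivalently $\bar z u\th \in H^p$. Neither condition depends on the choice of $f$, so (a) follows at once. In case (b), where $\th(0) \ne 0$, the identity pins down $f(0) = 2\,\th(0)^{-1}\int_\T u\th\, dm$; solvability of this constraint in the free parameter $c$ reduces to matching of the real parts. Since $\Re f(0) = \int_\T u\, dm$ for every $H^p$ extension of $u$, the solvability condition reads
\[
\Re\!\left[\frac{2}{\th(0)}\int_\T u\th\, dm\right] = \int_\T u\, dm,
\]
which, after rearrangement, is exactly the second condition in \eqref{eqn:twocond}.

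There is no serious obstacle; the main point is the Fourier bookkeeping, after which one sees that only the coefficient at $n=-1$ carries information that depends on the choice of $f$, and that piece of information is precisely what the second condition in (b) encodes. At $p=1$ one simply takes $f_0$ from the prescribed class of $H^1$ extensions of $u$ (those whose nontangential maximal functions are integrable); the computation and the resulting equivalences are otherwise unchanged.
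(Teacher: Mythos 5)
Your proposal is correct and rests on exactly the same identity as the paper's proof, namely $\ov z\ov f\th = 2\ov z u\th - \ov z f\th$ (equivalently $\ov f\th = 2u\th - f\th$), together with the observation that of the resulting constraints, only the one governing $f(0)$ depends on the choice of holomorphic extension $f$ of $u$. The paper organizes this by splitting $\ov z u\th$ (resp.\ $u\th$) into two $H^p$ pieces and then isolating a zero-mean condition, while you organize it via Fourier coefficients ($n\le -2$ giving $u\th\in H^p$, and $n=-1$ giving the scalar constraint $2\int_\T u\th\,dm = f(0)\th(0)$); these are two presentations of one and the same argument, and your rearrangement of the $n=-1$ constraint into $\Re\bigl[\tfrac{2}{\th(0)}\int_\T u\th\,dm\bigr]=\int_\T u\,dm$ is exactly the paper's condition $\int_\T u\bigl(\tfrac{\th}{\th(0)}-\tfrac12\bigr)dm\in i\R$.
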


\begin{proof} (a) If $u=\text{\rm Re}\,f$ for some $f\in K^p_\th$, then $u=\f12(f+\ov f)$, whence 
\begin{equation}\label{eqn:zutheta}
\ov zu\th=\f12\ov zf\th+\f12\ov z\ov f\th. 
\end{equation}
The first term on the right is in $H^p$ (because $\ov z\th=\th/z\in H^\infty$), and so is the second (because $f\in K^p_\th$). This shows that $\ov zu\th\in H^p$, proving the \lq\lq only if" part. 
\par Conversely, assume that $\ov zu\th\in H^p$. Let $f\in H^p$ be the function satisfying $\text{\rm Re}\,f=u$ (a.e. on $\T$) and $\text{\rm Im}\,f(0)=0$. Then \eqref{eqn:zutheta} is again valid, or equivalently, 
\begin{equation}\label{eqn:zuthetabis}
\ov z\ov f\th=2\ov zu\th-\ov zf\th.
\end{equation}
Our current assumption on $u$, coupled with the fact that $\ov zf\th\in H^p$, allows us to infer from \eqref{eqn:zuthetabis} that $\ov z\ov f\th\in H^p$. This means that $f\in K^p_\th$, so the \lq\lq if" part is now established as well. 
\par (b) Suppose that $u=\text{\rm Re}\,f$ for some $f\in K^p_\th$, and let $v:=\text{\rm Im}\,f$. (The functions $u$ and $v$, defined initially a.e. on $\T$, will be identified with their harmonic extensions into $\D$.) As before, we have \eqref{eqn:zutheta} and hence also \eqref{eqn:zuthetabis}; yet another way of rewriting this identity is 
$$u\th=\f12 f\th+\f12\ov f\th.$$
Here, each of the two terms on the right-hand side is in $H^p$, and therefore $u\th\in H^p$. In addition, we use the fact that $\ov z\ov f\th\in H^p$ in conjunction with \eqref{eqn:zuthetabis} to deduce that the function 
\begin{equation}\label{eqn:defng}
g:=2u\th-f\th
\end{equation}
is in $zH^p\left(=H^p_0\right)$. In particular, 
\begin{equation}\label{eqn:zeromean}
\int_{\T}g\,dm=0.
\end{equation}
Now, because 
$$\int_{\T}f\th\,dm=f(0)\th(0)=[u(0)+iv(0)]\cdot\th(0),$$
we may further rephrase \eqref{eqn:zeromean} in the form 
\begin{equation}\label{eqn:equiform}
\f2{\th(0)}\int_{\T}u\th\,dm=u(0)+iv(0).
\end{equation}
The quantity 
\begin{equation}\label{eqn:bigintegral}
\int_\T u\left(\frac{\th}{\th(0)}-\frac12\right)dm
\end{equation}
is thus equal to the (purely imaginary) number $iv(0)/2$. The necessity of \eqref{eqn:twocond} is thereby verified. 
\par Conversely, let $u\in\text{\rm Re}\,H^p$ be a function satisfying \eqref{eqn:twocond}. The value of the integral \eqref{eqn:bigintegral} being purely imaginary, say $ic$ for some $c\in\R$, we can find a function $f=u+iv\in H^p$ whose imaginary part, $v$, satisfies $v(0)=2c$. This done, we have \eqref{eqn:equiform}. Equivalently, the function $g$, defined by \eqref{eqn:defng} as before, obeys \eqref{eqn:zeromean}. We also know that $g\in H^p$, since $u\th$ and $f\th$ are both in 
$H^p$, and together with \eqref{eqn:zeromean} this means that $g$ actually belongs to $H^p_0$. Finally, we invoke the identity 
$$\ov z\ov f\th=\ov zg$$
(which coincides with \eqref{eqn:zuthetabis} and holds whenever $u=\text{\rm Re}\,f$) to conclude that $\ov z\ov f\th\in H^p$ and consequently $f\in K^p_\th$. 
\end{proof}

\medskip\noindent{\it Remarks.} (1) When proving the \lq\lq if" part in either (a) or (b), we had to produce a harmonic conjugate (say, $v$) of $u$ for which $u+iv\in K^p_\th$. In (a), the normalization $v(0)=0$ was used, but any other choice of $v$ (i.e., of the corresponding constant term) would also be fine; indeed, if $\th\in\mathcal I_0$, then $K^p_\th$ contains the constants. In (b), by contrast, the right choice is unique. 
\par (2) In \cite{DSib}, we considered the natural analogues of $K^p_\th$ spaces for the upper half-plane in place of the disk. In particular, the real parts of the functions that arise were characterized (on $\R$) by a condition similar to that in (a) above. However, the case of $\D$ turns out to be more sophisticated due to the privileged role of the point $0$, and this accounts for the dichotomy that manifests itself in Theorem \ref{thm:realparts}. 

\medskip The next fact is well known and easy to prove; see, e.g., \cite[Lemma 1]{DJAM}. When stating it, we use the standard notation $(H^\infty)^{-1}$ for the set $\{f\in H^\infty:1/f\in H^\infty\}$. 

\begin{lem}\label{lem:thetaphi} Suppose $\th$ and $\ph$ are two inner functions satisfying $\th/\ph=g/\ov g$ for some $g\in(H^\infty)^{-1}$. Then 
$$K^p_\th=gK^p_\ph\left(=\left\{gh:\,h\in K^p_\ph\right\}\right),\qquad1\le p\le\infty.$$
\end{lem}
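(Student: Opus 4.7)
The plan is to use the equivalent description of the model subspace stated earlier in the excerpt, namely
$$K^p_\th=\{f\in H^p:\,\ov z\ov f\th\in H^p\},$$
together with the fact that the hypothesis $\th/\ph=g/\ov g$, when read on $\T$, amounts to the pointwise identity $\ov g\,\th=g\,\ph$ a.e. on $\T$. Because $g\in(H^\infty)^{-1}$, both $g$ and $1/g$ are bounded analytic functions, so multiplication by $g^{\pm1}$ preserves $H^p$. With those two ingredients in hand, the stated identity $K^p_\th=gK^p_\ph$ reduces to a short algebraic manipulation, carried out as a pair of containments.

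For the inclusion $gK^p_\ph\subseteq K^p_\th$, I would start with $h\in K^p_\ph$ and set $f:=gh$. Membership $f\in H^p$ is immediate from $g\in H^\infty$. To check the defining condition $\ov z\ov f\th\in H^p$, I rewrite
$$\ov z\ov f\th=\ov z\,\ov g\,\ov h\,\th=\ov z\,\ov h\,(\ov g\,\th)=\ov z\,\ov h\,(g\,\ph)=g\cdot(\ov z\,\ov h\,\ph),$$
using the identity $\ov g\,\th=g\,\ph$. The factor in parentheses is in $H^p$ because $h\in K^p_\ph$, and multiplication by $g\in H^\infty$ keeps us in $H^p$, so $f\in K^p_\th$.

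For the reverse inclusion $K^p_\th\subseteq gK^p_\ph$, I would take $f\in K^p_\th$ and define $h:=f/g$. Since $1/g\in H^\infty$, we have $h\in H^p$. Rewriting $\ov g\,\th=g\,\ph$ as $\ph/\ov g=\th/g$ on $\T$, I compute
$$\ov z\,\ov h\,\ph=\ov z\,(\ov f/\ov g)\,\ph=\ov z\,\ov f\,(\ph/\ov g)=\ov z\,\ov f\,(\th/g)=(1/g)\cdot(\ov z\,\ov f\,\th),$$
and the last expression lies in $H^p$ because $\ov z\,\ov f\,\th\in H^p$ (by $f\in K^p_\th$) and $1/g\in H^\infty$. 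Hence $h\in K^p_\ph$ and $f=gh\in gK^p_\ph$.

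The argument involves no real obstacle; the only point worth flagging is the legitimacy of the pointwise boundary identity $\ov g\,\th=g\,\ph$ extracted from $\th/\ph=g/\ov g$. Since $g\in(H^\infty)^{-1}$, the function $g$ has nonzero nontangential boundary values a.e., so the cross-multiplication is valid a.e. on $\T$, which is all that is required for the computations above (everything is interpreted on $\T$, exactly as in the characterization $K^p_\th=\{f\in H^p:\ov z\ov f\th\in H^p\}$).
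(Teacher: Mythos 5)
Your proof is correct and complete. The paper itself does not supply a proof of this lemma—it simply remarks that the fact is ``well known and easy to prove'' and cites \cite[Lemma 1]{DJAM}—so there is nothing to compare against textually; your direct verification via the characterization $K^p_\th=\{f\in H^p:\ov z\ov f\th\in H^p\}$ is exactly the kind of short algebraic argument the author had in mind, and the closing remark about the a.e.\ validity of the boundary identity $\ov g\,\th=g\,\ph$ (thanks to $g\in(H^\infty)^{-1}$) correctly addresses the only point one might quibble about.
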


\par In particular, this lemma applies (and will be applied) when $\ph$ is a \lq\lq Frostman shift" of $\th$, i.e., has the form 
\begin{equation}\label{eqn:frostman}
\ph=\f{\th-w}{1-\ov w\th}
\end{equation}
for some $w\in\D$. In this case, we have $g=1-\ov w\th$. 

\section{Main result}

Given a function $f\in L^1$, we write $\mathcal Hf$ for its (harmonic) conjugate, so that 
$$(\mathcal Hf)(\ze)=\text{\rm p.v.}\,\f1{2\pi}\int_{-\pi}^{\pi}f\left(\ze e^{-it}\right)\cot\f t2\,dt$$
for almost all $\ze\in\T$. Thus, dealing with a function $u\in L^1_\R$ (and using the same letter for its Poisson extension into $\D$), we may view $\mathcal Hu$ as the boundary trace of the real harmonic function $v$ on $\D$ that vanishes at $0$ and makes $u+iv$ holomorphic. 

\par It is well known (see \cite[Chapter III]{G}) that the harmonic conjugation operator $\mathcal H$ maps $L^1$ into $L^1_w$, the {\it weak} $L^1$-space, defined as the set of measurable functions $g$ on $\T$ with 
$$\sup_{\la>0}\la m\left(\{\ze\in\T:\,|g(\ze)|>\la\}\right)<\infty.$$
Another classical theorem (due to M. Riesz) asserts that $\mathcal H$ is bounded on $L^p$, and hence also on $L^p_\R$, when $1<p<\infty$. Moreover, its norm has been computed. In fact, a result of Pichorides tells us that the quantity 
$$A_p:=\sup\left\{\|\mathcal Hu\|_p:\,u\in L^p_\R,\,\|u\|_p\le1\right\}$$
equals $\tan\f{\pi}{2p}$ if $1<p\le2$ and $\cot\f{\pi}{2p}$ if $2<p<\infty$; see \cite[Theorem 3.7]{Pi}. In what follows, we also need the constants 
$$B_p:=1+A_{p/2},\qquad2<p<\infty.$$

\par Finally, we recall that the {\it Smirnov class} $N^+$ is the set of all ratios $\ph/\psi$, where $\ph$ runs through $H^\infty$ and $\psi$ through the outer functions in $H^\infty$ (see \cite[Chapter II]{G}); we then define the {\it weak Hardy space} $H^1_w$ to be $N^+\cap L^1_w$. One can find several alternative definitions---or characterizations---of $H^1_w$ in the literature, sometimes in the context of more general $H^p_w$ classes. In particular, $H^1_w$ is known to coincide with the set of holomorphic functions on $\D$ whose nontangential maximal function is in $L^1_w$; see \cite{A0, CN} for a discussion of these matters. 

\par We are now in a position to state our main result. Before doing so, we emphasize that one may interpret the term \lq\lq superquadratic," as used below, by imposing the underlying conditions \eqref{eqn:defsuperone} and \eqref{eqn:defsupertwo} either a.e. on the unit circle or inside the disk. The two interpretations are equivalent, because our maps take values in the set of outer functions. 

\begin{thm}\label{thm:quasisq} {\rm (A)} There is a superquadratic map $S:H^2\to H^1_w$ such that the image $Sf$ of every $f\in H^2\setminus\{0\}$ is an outer function, and the following holds true whenever $2<p<\infty$:
\begin{equation}\label{eqn:twoinclus}
S\left(K^p_\th\right)\subset K^{p/2}_\th\,\,\text{for each }\th\in\mathcal I_0,\qquad S\left(H^p\right)\subset H^{p/2},
\end{equation}
and
\begin{equation}\label{eqn:normsf}
\|Sf\|_{p/2}\le B_p\|f\|_p^2
\end{equation}
for all $f\in H^p$. 
\par {\rm (B)} Given $\th\in\mathcal I$, there exists a superquadratic map $S_\th:H^2\to H^1_w$ such that the image $S_\th f$ of every $f\in H^2\setminus\{0\}$ is an outer function, and the following holds true whenever $2<p<\infty$:
\begin{equation}\label{eqn:twoinclustheta}
S_\th\left(K^p_\th\right)\subset K^{p/2}_\th,\qquad S_\th\left(H^p\right)\subset H^{p/2},
\end{equation}
and 
\begin{equation}\label{eqn:normsftheta}
\|S_\th f\|_{p/2}\le B_p\left(\f{1+|\th(0)|}{1-|\th(0)|}\right)^2\|f\|_p^2
\end{equation}
for all $f\in H^p$.
\end{thm}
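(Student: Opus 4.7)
For part (A), I would take the simplest possible candidate: given $f\in H^2$, set $u:=|f|^2\in L^1_\R$ and define
\[
Sf\;:=\;u+i\mathcal{H}u,
\]
i.e., the boundary trace of the unique Smirnov-class function on $\D$ whose real part is the Poisson extension of $u$ and whose imaginary part vanishes at the origin. The superquadratic axioms are then immediate: the identity $S(\la f)=|\la|^2 Sf$ yields \eqref{eqn:defsuperone}, while $|Sf|\ge\Re(Sf)=u=|f|^2$ yields \eqref{eqn:defsupertwo}. For $f\not\equiv0$ the Poisson integral of $u$ is strictly positive in $\D$, so $Sf$ is non-vanishing there; moreover, the Herglotz representation of analytic functions with nonnegative real part realises $Sf$ via the absolutely continuous measure $u\,dm$, which precludes any singular inner factor and makes $Sf$ outer. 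Finally, $Sf\in H^1_w$ because $u\in L^1$, the harmonic conjugation operator sends $L^1$ into $L^1_w$, and $Sf\in N^+$.

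The heart of part (A) is the inclusion $S(K^p_\th)\subset K^{p/2}_\th$ for $\th\in\mathcal{I}_0$ and $2<p<\infty$, which I would derive from Theorem \ref{thm:realparts}(a). Indeed $u\in L^{p/2}_\R\subset\Re\,H^{p/2}$ (as $p/2>1$), and the algebraic identity
\[
\ov z\,u\,\th\;=\;f\cdot\bigl(\ov z\,\ov f\,\th\bigr)
\]
exhibits $\ov z u\th$ as a product of two factors in $H^p$ (the second by the very definition of $K^p_\th$), hence $\ov z u\th\in H^{p/2}$. Theorem \ref{thm:realparts}(a) then furnishes \emph{some} $F\in K^{p/2}_\th$ with $\Re F=u$; the normalisation $\Im(Sf)(0)=0$ means $Sf$ differs from $F$ by a purely imaginary constant, which is harmless because $\th\in\mathcal{I}_0$ forces $K^{p/2}_\th$ to contain the constants. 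Thus $Sf\in K^{p/2}_\th$. The bound \eqref{eqn:normsf} then falls out from the triangle inequality and the Pichorides estimate $\|\mathcal{H}u\|_{p/2}\le A_{p/2}\|u\|_{p/2}$, together with $\|u\|_{p/2}=\|f\|_p^2$.

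For part (B) I would reduce to (A) via a Frostman shift. Put $w:=\th(0)$, $g:=1-\ov w\th$, and $\ph:=(\th-w)/(1-\ov w\th)$; then $\ph\in\mathcal{I}_0$, $g\in(H^\infty)^{-1}$ is outer, and Lemma \ref{lem:thetaphi} gives $K^p_\th=gK^p_\ph$, with the pointwise bounds $|g|\le1+|w|$ on $\T$ and $|g|\ge1-|w|$ on $\D$. Setting
\[
S_\th f\;:=\;(1+|w|)\,g\,\cdot\,S(f/g),\qquad f\in H^2
\]
(well defined since $1/g\in H^\infty$), I would combine part (A) applied to $f/g$ with Lemma \ref{lem:thetaphi} to deduce the inclusions \eqref{eqn:twoinclustheta}; the lower bound $|S_\th f|\ge(1+|w|)|g|\cdot|f/g|^2\ge|f|^2$ uses $|g|\le1+|w|$ on $\T$; outerness of $S_\th f$ is inherited from the factorisation $g\cdot S(f/g)$; and \eqref{eqn:normsftheta} drops out after invoking $\|g\|_\infty\le1+|w|$ and $\|1/g\|_\infty\le(1-|w|)^{-1}$.

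The main conceptual obstacle, I expect, is recognising that the \emph{naive} candidate $Sf=u+i\mathcal{H}u$ actually does the whole job in the $\mathcal{I}_0$ regime---essentially because of the preparatory Theorem \ref{thm:realparts}(a) and the algebraic identity above. The Frostman-shift detour in part (B) is forced upon us because the integral condition in \eqref{eqn:twocond} \emph{fails} for $u=|f|^2$ once $\th(0)\ne0$: a short computation using $\ov f\th\in H^p_0$ shows that $\int_\T|f|^2\th\,dm=0$ for every $f\in K^p_\th$, which makes the integral in \eqref{eqn:twocond} a nonzero real number rather than a purely imaginary one, so Theorem \ref{thm:realparts}(b) cannot be applied directly to $|f|^2$.
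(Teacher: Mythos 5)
Your proposal is correct and takes essentially the same approach as the paper: the same Herglotz/conjugate-function construction $Sf=u+i\mathcal{H}u$ with $u=|f|^2$, the same application of Theorem \ref{thm:realparts}(a) via the identity $\ov z u\th=f\cdot\ov z\ov f\th$, the same Pichorides bound for \eqref{eqn:normsf}, and the same Frostman-shift reduction $S_\th f=(1+|w|)g_\th S(f/g_\th)$ for part (B). Your closing observation---that $\int_\T|f|^2\th\,dm=0$ forces the integral condition in \eqref{eqn:twocond} to be real and nonzero, so Theorem \ref{thm:realparts}(b) cannot be applied directly to $|f|^2$ when $\th(0)\neq0$---is a correct and useful explanation of why the Frostman shift is genuinely needed, though the paper does not spell it out.
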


\par We remark that, while \eqref{eqn:normsftheta} obviously reduces to \eqref{eqn:normsf} when $\th\in\mathcal I_0$, part (A) of the theorem is not really a special case of (B). The reason is that the \lq\lq quasi-squaring" operator $S$ coming from (A) does not depend on $\th$, whereas its counterpart $S_\th$ from (B) does. At the same time, the operator $S_\th$ produced by our construction does reduce to $S$ when $\th\in\mathcal I_0$.

\medskip\noindent{\it Proof of Theorem \ref{thm:quasisq}.} (A) Given a function $f\in H^2$, we define 
$$(Sf)(z):=\int_{\T}\f{\ze+z}{\ze-z}\,|f(\ze)|^2\,dm(\ze),\qquad z\in\D.$$
In terms of the boundary values, we have
$$Sf=u+iv\quad\text{\rm a.\,e. on }\T,$$
where 
\begin{equation}\label{eqn:defumodsq}
u:=|f|^2\big|_\T
\end{equation}
and $v:=\mathcal Hu$. Because $u\in L^1$, it follows that $v\in L^1_w$ and hence $Sf\in H^1_w$. 
\par The map $S:H^2\to H^1_w$ that arises is sure to obey \eqref{eqn:defsuperone} and \eqref{eqn:defsupertwo} (these hold a.e. on $\T$, as well as everywhere on $\D$), so $S$ is superquadratic. In particular, the disk version of \eqref{eqn:defsupertwo} is verified by noting that
\begin{equation}\label{eqn:hrenovina}
|(Sf)(z)|\ge\text{\rm Re}\,(Sf)(z)=(\mathcal Pu)(z)\ge|f(z)|^2,\qquad z\in\D,
\end{equation}
where $\mathcal P$ stands for the Poisson integral operator. In addition, since a holomorphic function with positive real part is necessarily outer (see \cite[p.\,65]{G}), we infer that $Sf$ is outer whenever $f$ is non-null. Indeed, \eqref{eqn:hrenovina} tells us that $\text{\rm Re}\,(Sf)>0$ on $\D$ for any such $f$. 

\par Now suppose that $\th\in\mathcal I_0$ and $f\in K^p_\th$, where $2<p<\infty$. The corresponding function $u$, given by \eqref{eqn:defumodsq}, will then satisfy 
\begin{equation}\label{eqn:charmod}
\ov zu\th=f\cdot\ov z\ov f\th\in H^{p/2},
\end{equation}
since $f$ and $\ov z\ov f\th$ are both in $H^p$. By virtue of Theorem \ref{thm:realparts}, part (a) (see also Remark (1) following that theorem's proof), we readily deduce from \eqref{eqn:charmod} that $u\in\text{\rm Re}\,K^{p/2}_\th$ and therefore $Sf\in K^{p/2}_\th$. Thus we arrive at the first inclusion in \eqref{eqn:twoinclus}. 

\par Finally, assuming that $f$ is merely in $H^p$ (with $2<p<\infty$), we use the above-mentioned properties of the harmonic conjugation operator to obtain
\begin{equation*}
\begin{aligned}
\|Sf\|_{p/2}&\le\|u\|_{p/2}+\|v\|_{p/2}\\
&\le\left(1+A_{p/2}\right)\|u\|_{p/2}=B_p\|f\|_p^2.
\end{aligned}
\end{equation*}
This proves \eqref{eqn:normsf} and the second inclusion in \eqref{eqn:twoinclus}. 

\smallskip (B) Given $\th\in\mathcal I$, we write $w:=\th(0)$ and consider the function $g_\th:=1-\ov w\th$. Note, in particular, that $g_\th\in(H^\infty)^{-1}$. Moreover, 
\begin{equation}\label{eqn:abobel}
1-|w|\le|g_\th|\le1+|w|
\end{equation}
on $\D$. Next, we define the map $S_\th$ by putting 
$$S_\th f:=(1+|w|)\,g_\th\,S(f/g_\th),\qquad f\in H^2,$$
where $S$ is the superquadratic operator coming from part (A) above. The facts that $S_\th$ is superquadratic and maps $H^2$ into $H^1_w$ are easily deduced from the corresponding properties of $S$, coupled with \eqref{eqn:abobel}. For example, to check that $|S_\th f|\ge|f|^2$ (on $\D$) for each $f\in H^2$, one uses the estimate $|S(f/g_\th)|\ge|f/g_\th|^2$ and combines it with the right-hand inequality from \eqref{eqn:abobel}. 

\par We also have to verify that $S_\th f$ is an outer function whenever $f\in H^2\setminus\{0\}$. This is indeed true, because the functions $g_\th$ and $S(f/g_\th)$ are both outer, and so is their product. 

\par Now let $2<p<\infty$. To prove the first inclusion in \eqref{eqn:twoinclustheta}, consider the inner function $\ph$ given by \eqref{eqn:frostman} (with the current value of $w$) and note that $\ph\in\mathcal I_0$. Given $f\in K^p_\th$, we may then invoke Lemma \ref{lem:thetaphi} (and the remark following it) to infer that $f/g_\th\in K^p_\ph$. Using part (A) above with $\ph$ in place of $\th$, we further deduce that $S(f/g_\th)\in K^{p/2}_\ph$, and another application of Lemma \ref{lem:thetaphi} ensures that $g_\th S(f/g_\th)$ is in $K^{p/2}_\th$. This last function being a constant multiple of $S_\th f$, we now see that $S_\th f\in K^{p/2}_\th$, and \lq\lq half" of \eqref{eqn:twoinclustheta} is thereby established. 
 
\par Finally, to prove the remaining part of \eqref{eqn:twoinclustheta} along with the norm estimate \eqref{eqn:normsftheta}, we take an arbitrary $f\in H^p$ and proceed as follows: 
\begin{equation*}
\begin{aligned}
\|S_\th f\|_{p/2}&\le(1+|w|)\|g_\th\|_\infty\|S(f/g_\th)\|_{p/2}\\
&\le(1+|w|)^2B_p\left\|f/g_\th\right\|^2_p\le B_p\left(\f{1+|w|}{1-|w|}\right)^2\|f\|^2_p.
\end{aligned}
\end{equation*}
Here, we have combined \eqref{eqn:abobel} and \eqref{eqn:normsf}, the latter being applied with $f/g_\th$ in place of $f$. The proof is complete. \qed

\medskip We conclude with a brief remark concerning the relation \eqref{eqn:charmod}, which (in conjunction with Theorem \ref{thm:realparts}) played a key role in the above proof. Namely, the condition $\ov zu\th\in H^{p/2}$ is actually known to characterize the nonnegative functions $u$ on $\T$ that are writable as $|f|^2$ for some $f\in K^p_\th$. Various versions---and an extension---of this result can be found in \cite{DSib}, \cite[Lemma 5]{DJAM} and \cite[Theorem 1.1]{DAMP}. In the polynomial case, when $\th=z^{n+1}$, one recovers the classical Fej\'er--Riesz representation theorem for nonnegative trigonometric polynomials; see, e.g., \cite[p.\,26]{Sim}. 

\section{The endpoint cases}

In light of the preceding result, which deals with the range $2<p<\infty$, one may be curious about the endpoint cases $p=2$ and $p=\infty$. The operator $S$ (or $S_\th$) constructed in the proof admits no nice extension to the endpoints, but it is conceivable that some other map might do the job. However, we could scarcely expect to find a single superquadratic operator that obeys the required norm estimates for the whole extended range of $p$'s---that would probably be too much to hope for. Instead, we consider the two endpoints separately, asking in each case if there exists a superquadratic map $S$ (or $S_\th$) from $K^p_\th$ to $K^{p/2}_\th$ that satisfies the appropriate endpoint version of \eqref{eqn:normsf} and/or \eqref{eqn:normsftheta}. The exponents in question are thus $p=2$ and $p=\infty$; our superquadratic operators are {\it a priori} allowed to depend on $\th$ (even when $\th\in\mathcal I_0$), but the constants replacing the $B_p$'s should be absolute. 
\par The case of $p=\infty$ is actually trivial, since the map $S:H^\infty\to H^\infty$ defined by 
$$Sf=\|f\|_\infty f$$
is superquadratic, leaves $K^\infty_\th$ invariant (for each $\th\in\mathcal I$), and satisfies $\|Sf\|_\infty=\|f\|^2_\infty$. In particular, \eqref{eqn:normsf} holds with $p=\infty$ if we put $B_\infty=1$. By contrast, things become really bad at the other extreme. 

\begin{thm}\label{thm:endpeqtwo} Suppose that to each $\th\in\mathcal I_0$ there corresponds a superquadratic map $\mathcal S_\th:K^2_\th\to K^1_\th$. Then 
\begin{equation}\label{eqn:supsupinfty}
\sup_{\th\in\mathcal I_0}\sup
\left\{\|\mathcal S_\th f\|_1/\|f\|_2^2:\,f\in K^2_\th\setminus\{0\}\right\}=\infty.
\end{equation}
\end{thm}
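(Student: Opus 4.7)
\medskip\noindent\emph{Proof plan.} I would argue by contradiction. Suppose the supremum in \eqref{eqn:supsupinfty} equals some finite $C$. Then for every $\th\in\mathcal I_0$ and every $f\in K^2_\th\setminus\{0\}$, the function $g:=\mathcal S_\th f$ lies in $K^1_\th$ and satisfies both $|g|\ge|f|^2$ a.e.\ on $\T$ and $\|g\|_1\le C\|f\|_2^2$. Equivalently, setting
\begin{equation*}
m_\th(f):=\inf\bigl\{\|g\|_1:\,g\in K^1_\th,\ |g|\ge|f|^2\text{ a.e.\ on }\T\bigr\},
\end{equation*}
we must have $m_\th(f)\le C\|f\|_2^2$ for every admissible pair. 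My aim is to produce a sequence $(\th_n,f_n)$ along which $m_{\th_n}(f_n)/\|f_n\|_2^2\to\infty$, contradicting $C<\infty$.

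The obstruction to exploit is the one diagnosed at the end of Section 3: the natural quasi-square $Sf=|f|^2+i\mathcal H|f|^2$ from Theorem~\ref{thm:quasisq} fails to land in $L^1$ at $p=2$ precisely because $\mathcal H$ is unbounded from $L^1$ to $L^1$. Concretely, pick $\la_n\in(0,1)$ with $\la_n\to 1$, let $\th_n\in\mathcal I_0$ be an inner function vanishing at $0$ and at $\la_n$ (the simplest choice being $\th_n(z)=z(z-\la_n)/(1-\la_n z)$), and set $f_n(z)=1/(1-\la_n z)\in K^2_{\th_n}$. Then $u_n:=|f_n|^2$ is a rescaled Poisson kernel at $\la_n$; one computes $\|u_n\|_1=\|f_n\|_2^2=1/(1-\la_n^2)$, while the conjugate Poisson kernel $\mathcal Hu_n$ satisfies $\|\mathcal Hu_n\|_1\asymp\log(1/(1-\la_n))/(1-\la_n)$, so that $\|\mathcal Hu_n\|_1/\|u_n\|_1\to\infty$. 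If one can establish the matching lower bound
\begin{equation*}
m_{\th_n}(f_n)\gtrsim\|\mathcal Hu_n\|_1,
\end{equation*}
then dividing by $\|f_n\|_2^2$ produces the desired blow-up of the ratio.

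The hard part is this last lower bound. Writing any admissible $g=p+iq\in K^1_{\th_n}$, Theorem~\ref{thm:realparts}(a) forces $\bar zp\th_n\in H^1$, which pins $q$ as the harmonic conjugate of $p$ up to a real additive constant; coupled with the pointwise constraint $p^2+q^2\ge u_n^2$, one would like to conclude $\|q\|_1\gtrsim\|\mathcal Hu_n\|_1$ and therefore $\|g\|_1\ge\|q\|_1\gtrsim\|\mathcal Hu_n\|_1$. The difficulty is that $p$ need not be comparable to $u_n$: standard inequalities (Jensen, Cauchy--Schwarz, point evaluation via the Poisson kernel, reproducing-kernel duality) each yield only $m_{\th_n}(f_n)\gtrsim\|u_n\|_1$ and miss the crucial logarithmic factor. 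Overcoming this will likely require a convex-duality reformulation---expressing $m_\th(f)$ as the supremum of $\int u\,\varphi\,dm$ over a suitably normalized class of test functions $\varphi$, and producing a $\varphi$ that realizes the blow-up---or else a refinement of the test family in which $\th_n$ carries additional zeros clustered near $\la_n$, forcing the $\log(1/(1-\la_n))$ factor through a combinatorial mechanism while simultaneously keeping $f_n^2$ out of $K^1_{\th_n}$. This conversion from the pointwise modulus bound to an $L^1$-cost on $\Im g$ is where the bulk of the work lies.
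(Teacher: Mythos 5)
Your test functions are exactly those the paper uses (with $a=\la_n$), your computation $\|f_n\|_2^2=1/(1-\la_n^2)$ is right, and your intuition about the $\log(1/(1-\la_n))$ blow-up rate is correct. But you explicitly leave the crucial lower bound $m_{\th_n}(f_n)\gtrsim\|\mathcal Hu_n\|_1$ unproved, and the two escape routes you sketch (convex duality; refined test families with clustered zeros) are not what closes the gap. That is a genuine hole, not a detail.

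The ingredient you are missing is the finite-dimensionality of $K^1_{\th_a}$. Since $\th_a(z)=z\frac{z-a}{1-\bar a z}$ is a degree-$2$ Blaschke product, $K^1_{\th_a}=K^2_{\th_a}$ is two-dimensional and spanned by $1$ and $f_a$, so any candidate $h_a:=\mathcal S_{\th_a}f_a$ is \emph{forced} to have the explicit form $h_a=\la_a+\mu_a/(1-\bar a z)$. This turns the hard question of lower-bounding an infimum over all admissible $g$ into elementary coefficient estimates. The assumed upper bound $\|h_a\|_1\le C(1-|a|^2)^{-1}$, fed through Hardy's inequality $\|h\|_1\ge\frac1\pi\sum_{n\ge0}|\widehat h(n)|/(n+1)$, yields $|\la_a+\mu_a|\lesssim(1-|a|)^{-1}$ and $|\mu_a|\log\frac1{1-|a|}\lesssim(1-|a|)^{-1}$. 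In the opposite direction, the superquadratic constraint $|h_a|\ge|f_a|^2$ is exploited in $L^2$, not $L^1$: Parseval gives $\|h_a\|_2^2\le|\la_a+\mu_a|^2+|\mu_a|^2/(1-|a|)$, while $\|h_a\|_2^2\ge\|f_a\|_4^4\gtrsim(1-|a|)^{-3}$. The Hardy bound makes the $\mu_a$-term negligible as $|a|\to1$, forcing $|\la_a+\mu_a|\gtrsim(1-|a|)^{-3/2}$, which contradicts the Hardy bound $|\la_a+\mu_a|\lesssim(1-|a|)^{-1}$. No duality reformulation and no direct comparison of $\Re g$ with $u_n$ is needed; it is precisely the two-dimensionality that converts the pointwise modulus bound into a usable algebraic constraint on $(\la_a,\mu_a)$.
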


\begin{proof} If \eqref{eqn:supsupinfty} were false, there would be an absolute constant $C>0$ such that 
\begin{equation}\label{eqn:ssfinite}
\|\mathcal S_\th f\|_1\le C\|f\|_2^2
\end{equation}
whenever $\th\in\mathcal I_0$ and $f\in K^2_\th$. Now let $a\in\D$ be a point with $|a|\ge\f12$, and put 
\begin{equation}\label{eqn:twofactors}
\th_a(z):=z\f{z-a}{1-\ov az},
\end{equation}
so that $\th_a\in\mathcal I_0$. Note also that the function 
$$f_a(z):=\f1{1-\ov az}$$
is in $K^2_{\th_a}$. In fact, this last subspace coincides with $K^1_{\th_a}$ and is two-dimensional; it is spanned by $f_a$ and the constant function $1$. Thus, writing $h_a:=\mathcal S_{\th_a}f_a$, we see that 
$$h_a(z)=\la_a+\f{\mu_a}{1-\ov az}$$
with certain coefficients $\la_a,\mu_a\in\C$. 
\par An application of \eqref{eqn:ssfinite} with $\th=\th_a$ and $f=f_a$ now yields 
\begin{equation}\label{eqn:ssfa}
\|h_a\|_1\le C(1-|a|^2)^{-1},
\end{equation}
and we are going to derive further information by estimating the left-hand side, $\|h_a\|_1$, from below. To this end, we invoke Hardy's inequality 
\begin{equation}\label{eqn:hardyineq}
\|h\|_1\ge\f1\pi\sum_{n=0}^{\infty}\f{|\widehat h(n)|}{n+1},
\end{equation}
valid for any $h\in H^1$ (see \cite[p.\,89]{G}); here $\widehat h(n)$ is the $n$th Taylor coefficient of $h$. When $h=h_a$, \eqref{eqn:hardyineq} tells us that 
\begin{equation*}
\begin{aligned}
\|h_a\|_1&\ge\f1\pi\left(|\la_a+\mu_a|
+|\mu_a|\sum_{n=1}^{\infty}\f{|a|^n}{n+1}\right)\\
&\ge\f1\pi|\la_a+\mu_a|+\f1{2\pi}|\mu_a|\log\f1{1-|a|}.
\end{aligned}
\end{equation*}
Combining this with \eqref{eqn:ssfa}, we find that 
\begin{equation}\label{eqn:lamuab}
|\la_a+\mu_a|\le\f M{1-|a|}
\end{equation}
and 
\begin{equation}\label{eqn:mulogab}
|\mu_a|\log\f1{1-|a|}\le\f M{1-|a|}
\end{equation}
with an absolute constant $M>0$; in fact, $M=2\pi C$ would do. 
\par On the other hand, because $\mathcal S_{\th_a}$ is superquadratic, we have $|h_a|\ge|f_a|^2$ on $\T$, and so 
\begin{equation}\label{eqn:hnormtwofour}
\|h_a\|_2^2\ge\|f_a\|_4^4.
\end{equation}
Parseval's identity yields 
\begin{equation*}
\begin{aligned}
\|h_a\|_2^2&=|\la_a+\mu_a|^2+|\mu_a|^2(|a|^2+|a|^4+\dots)\\
&=|\la_a+\mu_a|^2+\f{|\mu_a|^2|a|^2}{1-|a|^2}
\le|\la_a+\mu_a|^2+\f{|\mu_a|^2}{1-|a|},
\end{aligned}
\end{equation*}
while a simple computation reveals that 
$$\|f_a\|_4^4\ge\f c{(1-|a|)^3}$$
with an absolute constant $c>0$. Taking these estimates into account, we go back to \eqref{eqn:hnormtwofour} to deduce that 
\begin{equation}\label{eqn:tumba}
|\la_a+\mu_a|^2+\f{|\mu_a|^2}{1-|a|}\ge\f c{(1-|a|)^3}.
\end{equation}
At the same time, \eqref{eqn:mulogab} implies that 
$$|\mu_a|^2\le\f{M^2}{(1-|a|)^2}\left(\log\f1{1-|a|}\right)^{-2}
\le\f c{2(1-|a|)^2},$$
whenever $|a|$ is close enough to $1$. Together with \eqref{eqn:tumba}, this means that for such $a$'s we have 
$$|\la_a+\mu_a|^2\ge\f c{2(1-|a|)^3},$$
or equivalently, 
$$|\la_a+\mu_a|\ge\f{c_0}{(1-|a|)^{3/2}}$$
with $c_0:=\sqrt{c/2}$. However, for small values of $1-|a|$, this last estimate is obviously incompatible with \eqref{eqn:lamuab}. The contradiction completes the proof.
\end{proof}

\par A glance at the proof reveals that the class $\mathcal I_0$ in the theorem's statement can be actually replaced by a tiny subset thereof, namely, by the family of two-factor Blaschke products of the form \eqref{eqn:twofactors}. We now supplement Theorem \ref{thm:endpeqtwo} (and its refined version just mentioned) with another result in the same vein, which is essentially a consequence of Aleksandrov's work in \cite{A2}. This time we produce a {\it single} inner function $\th$ for which the estimate 
\begin{equation}\label{eqn:onetwoest}
\|Sf\|_1\le C\|f\|_2^2,\qquad f\in K^2_\th,
\end{equation}
fails whenever $S:K^2_\th\to K^1_\th$ is a superquadratic map and $C$ a positive constant. 

\begin{thm}\label{thm:endpeqtwosingle} There exists an inner function $\th$ such that every superquadratic operator $S:K^2_\th\to K^1_\th$ satisfies 
\begin{equation}\label{eqn:supeqinfty}
\sup\left\{\|Sf\|_1/\|f\|_2^2:\,
f\in K^2_\th\setminus\{0\}\right\}=\infty.
\end{equation}	
\end{thm}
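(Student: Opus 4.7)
My strategy is to reduce the nonexistence of a bounded superquadratic operator $S : K^2_\th \to K^1_\th$ to a purely function-theoretic majorization failure, and then invoke Aleksandrov's work in \cite{A2} to produce a specific $\th$ exhibiting that failure. The reduction is immediate: if some $S$ satisfied $\|Sf\|_1 \le C\|f\|_2^2$ for every $f \in K^2_\th$, then $h := Sf$ would be a $K^1_\th$-majorant of $|f|^2$ in the sense that $|h| \ge |f|^2$ on $\T$, with the quantitative control $\|h\|_1 \le C\|f\|_2^2$. It therefore suffices to exhibit an inner function $\th$ for which no such uniform majorization is possible.

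The natural test family is inherited from Theorem \ref{thm:endpeqtwo}: the Cauchy kernels $f_a(z) = (1-\bar a z)^{-1}$, which lie in $K^2_\th$ whenever $\th(a)=0$, with $\|f_a\|_2^2 = (1-|a|^2)^{-1}$ and $|f_a|^2(\ze)=|1-\bar a\ze|^{-2}$ concentrated near $a/|a|$. The difficulty compared to Theorem \ref{thm:endpeqtwo} is that here a putative majorant $h \in K^1_\th$ has substantially more flexibility than in the two-dimensional spaces $K^1_{\th_{a_n}}$ used there: the extra zeros of $\th$ enlarge the ambient model space and could in principle absorb the obstruction. Aleksandrov's contribution in \cite{A2}, as I would invoke it, is the construction of a single Blaschke product $\th$ whose zeros contain a sequence $\{a_n\}$ with $|a_n|\to 1$, arranged so that the obstruction nevertheless survives inside the full $K^1_\th$: for every sequence $h_n \in K^1_\th$ with $|h_n| \ge |f_{a_n}|^2$ on $\T$, one has $\|h_n\|_1/\|f_{a_n}\|_2^2 \to \infty$. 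Taking $h_n := S f_{a_n}$ would then contradict the assumed norm estimate $\|Sf_{a_n}\|_1 \le C\|f_{a_n}\|_2^2$.

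The main obstacle is extracting the precise version of Aleksandrov's theorem needed, since the results in \cite{A2} are typically phrased in terms of (un)boundedness of embeddings $K^p_\th \hookrightarrow L^q(\mu)$ or of Clark-type measure constructions, not in terms of $K^1_\th$-majorants of $|f|^2$. I would bridge the two formulations by pairing the majorization $|h| \ge |f|^2$ against bounded elements of $K^\infty_\th$, or equivalently by testing at the reproducing kernels at the points $a_n$, thereby converting the $K^1_\th$-majorant bound into a quantitative embedding inequality that Aleksandrov shows must fail for the constructed $\th$. Once this technical bridge is in place, the remainder of the proof reduces to routine bookkeeping in the spirit of Theorem \ref{thm:endpeqtwo}, using Hardy's inequality and the explicit structure of the Cauchy kernels to force $\|Sf_{a_n}\|_1 / \|f_{a_n}\|_2^2 \to \infty$, contradicting the supposed boundedness of $S$.
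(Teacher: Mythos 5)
Your opening reduction is correct and matches the paper's: a bounded superquadratic $S$ would produce, for each $f\in K^2_\th$, a majorant $h=Sf\in K^1_\th$ with $|h|\ge|f|^2$ and $\|h\|_1\le C\|f\|_2^2$, so it suffices to find one $\th$ for which such uniform majorization is impossible. But the core of your argument --- the precise input you need from \cite{A2} --- is left as an unproved conjecture, and it is not the statement Aleksandrov actually provides. What \cite[Section 4]{A2} yields is a pair $(\th,\mu)$, with $\mu$ a positive Borel measure on $\D$, such that $K^1_\th$ embeds in $L^1(\mu)$ while $K^2_\th$ does \emph{not} embed in $L^2(\mu)$. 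With that in hand no test functions, reproducing kernels, or Hardy-inequality bookkeeping are needed: the one-sided embedding gives $\int_\D|Sf|\,d\mu\le B\|Sf\|_1$, and the superquadratic inequality $|Sf|\ge|f|^2$ then forces $\int_\D|f|^2\,d\mu\le B\,C\|f\|_2^2$ for all $f\in K^2_\th$, i.e.\ a bounded embedding of $K^2_\th$ into $L^2(\mu)$ --- contradiction. The measure $\mu$ itself is the ``pairing'' object; your proposed bridge via elements of $K^\infty_\th$ or point evaluations at the $a_n$ does not obviously produce it.

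The gap is therefore twofold. First, the property you attribute to Aleksandrov --- that every $h_n\in K^1_\th$ majorizing $|f_{a_n}|^2$ on $\T$ satisfies $\|h_n\|_1/\|f_{a_n}\|_2^2\to\infty$ --- is not proved in \cite{A2}, and you do not derive it; you explicitly flag this as ``the main obstacle'' and then only sketch a heuristic for overcoming it. Second, restricting the test family to Cauchy kernels $f_{a_n}$ is a genuine loss of generality: the failure of the $K^2_\th\hookrightarrow L^2(\mu)$ embedding in such constructions need not be detectable on reproducing kernels alone (the reproducing kernel thesis for these embeddings is known to be delicate), so the contradiction you aim for may simply not materialize along that family. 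To repair the proof, drop the Cauchy-kernel strategy and use the asymmetric embedding pair $(\th,\mu)$ directly, as above.
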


\begin{proof} Results of \cite[Section 4]{A2} imply that there exists an inner function $\th$ and a positive Borel measure $\mu$ on $\D$ with the following properties: $K^1_\th$ embeds in $L^1(\mu)$ (meaning that 
$$\int_{\D}|g|\,d\mu\le B\|g\|_1,\qquad g\in K^1_\th,$$
with some constant $B>0$ independent of $g$), but $K^2_\th$ does not embed in $L^2(\mu)$. Now, if for that $\th$ we could find a superquadratic operator $S:K^2_\th\to K^1_\th$ satisfying \eqref{eqn:onetwoest} with some fixed $C>0$, then it would follow that
$$\int_{\D}|f|^2\,d\mu\le\int_{\D}|Sf|\,d\mu\le B\|Sf\|_1\le BC\|f\|_2^2$$
for each $f\in K^2_\th$, leading to a contradiction.
\end{proof}

\section{Extrapolation theorem: statement and proof}

In this section, we apply our main result (namely, Theorem \ref{thm:quasisq} above) to deduce an extrapolation theorem for a class of sublinear operators acting on $K^p_\th$ spaces. 

\par Suppose that $\mathcal E_1$ and $\mathcal E_2$ are two function spaces. More precisely, it will be assumed for $j=1,2$ that $\mathcal E_j$ is a vector space consisting of complex-valued functions that live on a certain set $X_j$. Recall that an operator $T:\mathcal E_1\to\mathcal E_2$ is said to be {\it sublinear} if it satisfies 
\begin{equation}\label{eqn:sublinone}
|T(f+g)|\le|Tf|+|Tg|
\end{equation}
and 
\begin{equation}\label{eqn:sublintwo}
|T(\la f)|=|\la||Tf|
\end{equation}
whenever $f,g\in\mathcal E_1$ and $\la\in\C$. 

\par Furthermore, we say that an operator $T:\mathcal E_1\to\mathcal E_2$ is {\it solid} if it has the following properties: First, 
there exists a constant $\ga>0$ such that 
\begin{equation}\label{eqn:niceone}
|Tf|^2\le\ga|T(f^2)|
\end{equation}
for every $f\in\mathcal E_1$ satisfying $f^2\in\mathcal E_1$, and secondly, 
\begin{equation}\label{eqn:nicetwo}
|TF|\le|TG|
\end{equation}
whenever $F,G\in\mathcal E_1$ are functions with $|F|\le|G|$ on $X_1$. 

\par It is understood that conditions \eqref{eqn:sublinone}--\eqref{eqn:nicetwo} above hold pointwise on $X_2$, either everywhere or almost everywhere (in the appropriate sense), depending on the context. 

\par The statement of our extrapolation theorem below involves a general measure space $(X,\mathfrak A,\mu)$, where the three symbols have the usual meaning. We write $L^p(\mu)$ for $L^p(X,\mathfrak A,\mu)$; in particular, $L^0(\mu)$ stands for the space of $\mathfrak A$-measurable functions on $X$. The notation $L^p$ (without specifying the measure) is, of course, retained for the case of $m$, the normalized Lebesgue measure on $\T$. 

\begin{thm}\label{thm:extrapol} Let $1<\si<\infty$ and $1\le\tau\le\infty$. Given an inner function $\th$ and a measure space $(X,\mathfrak A,\mu)$, suppose that $T:K^1_{\th^2}\to L^0(\mu)$ is a solid sublinear operator. Assume also that $T$ maps $K^{\si}_\th$ boundedly into $L^{\tau}(\mu)$. Then $T$ maps $K^p_\th$ boundedly into $L^q(\mu)$ whenever $\si<p<\infty$ and $p/q=\si/\tau$.
\end{thm}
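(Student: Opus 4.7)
The plan is to combine the quasi-squaring map $S_\th$ of Theorem \ref{thm:quasisq}(B) with the solidity of $T$ to set up a ``doubling'' step that doubles the exponent $p$ (and $q$) while preserving the ratio $p/q=\si/\tau$, then iterate, and finally fill in the gaps by interpolation. For the doubling step, take $f\in K^p_\th$ with $p>2$ and form $g:=S_\th f\in K^{p/2}_\th$; by Theorem \ref{thm:quasisq} we have $|f|^2\le|g|$ on $\T$ and $\|g\|_{p/2}\le C\|f\|_p^2$. Since $p\ge2$ makes $f^2\in K^1_{\th^2}$, the first solidity condition \eqref{eqn:niceone} gives $|Tf|^2\le\ga|T(f^2)|$, and the monotonicity \eqref{eqn:nicetwo} applied to $|f^2|\le|g|$ gives $|T(f^2)|\le|Tg|$; chaining these, $|Tf|^2\le\ga|Tg|$. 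Raising to the $q/2$-th power and integrating with respect to $\mu$, we obtain $\|Tf\|_q\le\ga^{1/2}\|Tg\|_{q/2}^{1/2}$. Hence any bound for $T$ from $K^{p/2}_\th$ to $L^{q/2}(\mu)$ propagates to a bound from $K^p_\th$ to $L^q(\mu)$ with the same ratio $p/q$.

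Starting from the given bound $T:K^\si_\th\to L^\tau(\mu)$ and noting that $\si>1$ forces $2^k\si>2$ for every $k\ge1$, so the doubling step is always legal, iterating it $k$ times yields $T:K^{2^k\si}_\th\to L^{2^k\tau}(\mu)$ boundedly for every $k\ge0$. This establishes the theorem along the discrete sequence $p=2^k\si$, $q=2^k\tau$.

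It remains to handle the exponents $p$ strictly between consecutive $2^k\si$ and $2^{k+1}\si$, which I would do by real interpolation on the couple $(K^{2^k\si}_\th,K^{2^{k+1}\si}_\th)$. Because $K^r_\th$ is a complemented subspace of $L^r$ for $1<r<\infty$, the complementing projection being built out of the Riesz projection together with multiplication by $\th$ and $\ov\th$, the family $\{K^r_\th\}$ inherits a real-interpolation scale structure from the $L^r$ scale, and the intermediate spaces are again model subspaces $K^p_\th$. A Marcinkiewicz-type real interpolation theorem for sublinear operators then gives the desired strong-type bound at all intermediate $(p,q)$, the relation $p/q=\si/\tau$ being preserved along the interpolation line that passes through our two known endpoints. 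The main obstacle is precisely this interpolation step, since the Calder\'on truncation argument underlying the classical proof of Marcinkiewicz destroys pseudocontinuability and hence takes us out of the subspaces $K^p_\th$; the way around it is to pass through the complementation of $K^p_\th$ in $L^p$ and invoke the abstract real interpolation machinery on the resulting retract of the $L^p$ couple.
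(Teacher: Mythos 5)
Your proposal is essentially correct and follows the same outline as the paper's proof. The doubling step --- applying solidity via \eqref{eqn:niceone}, monotonicity via \eqref{eqn:nicetwo}, and the quasi-square map $S_\th$ of Theorem \ref{thm:quasisq} to pass from a bound at $(p/2,q/2)$ to one at $(p,q)$ --- and the resulting dyadic iteration along $p=2^k\si$, $q=2^k\tau$, are the same as in the paper.

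The one place where you depart from the paper is the interpolation step, and it is worth spelling out the difference. The paper does not interpolate the couple $(K^{p_n}_\th, K^{p_{n+1}}_\th)$ directly; it observes that the bounded projection $P_\th h := P_+h-\th P_+(\ov\th h)$ realizes $K^p_\th$ as a complemented subspace of $L^p$, converts the estimates on $T$ into estimates on the sublinear operator $TP_\th:L^p\to L^q(\mu)$, and applies the Calder\'on--Zygmund extension of the Riesz--Thorin convexity theorem. You correctly identify the same complementation structure and the need for it (the Calder\'on truncation would indeed take you out of $K^p_\th$), but you invoke a \lq\lq Marcinkiewicz-type'' real interpolation theorem instead of the convexity theorem. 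That choice becomes problematic when $\tau<\si$: along the line $p/q=\si/\tau$ one then has $q<p$, and real interpolation of strong-type endpoints with second parameter $r=p$ naturally lands in the Lorentz space $L^{q,p}(\mu)$, which strictly contains $L^q(\mu)$ when $p>q$, so the desired strong-type conclusion does not follow. The Riesz--Thorin-type convexity theorem for sublinear operators does not impose this restriction on the exponents, which is why the paper uses it. Apart from this --- which only bites in the regime $\tau<\si$ --- your argument matches the paper's.
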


\begin{proof} The case where $\tau=\infty$ is trivial, since the only possible value of $q$ is then $\infty$, and $K^p_\th\subset K^{\si}_\th$ for $p>\si$. 
\par To deal with the case $1\le\tau<\infty$, we begin by showing that $T$ acts boundedly from $K^{2\si}_\th$ to $L^{2\tau}(\mu)$. Let $S_\th$ be the superquadratic map from Theorem \ref{thm:quasisq}. Given $f\in K^{2\si}_\th$, we have then $S_\th f\in K^{\si}_\th$ and 
\begin{equation}\label{eqn:sigmakish}
\|S_\th f\|_\si\le C\|f\|_{2\si}^2,
\end{equation}
where 
$$C=C(\si,\th):=B_{2\si}\left(\f{1+|\th(0)|}{1-|\th(0)|}\right)^2;$$
also, $|S_\th f|\ge|f|^2$ on $\D$ (and $m$-almost everywhere on $\T$). For $f$ as above, we may invoke \eqref{eqn:niceone} and then \eqref{eqn:nicetwo}, with $F=f^2$ and $G=S_\th f$, to find that
\begin{equation}\label{eqn:usefulineq}
|Tf|^2\le\ga|T(f^2)|\le\ga|T(S_\th f)|
\end{equation}
$\mu$-almost everywhere on $X$. (To justify the former step, note that $f^2$ is in $K^\si_{\th^2}$ and hence in $K^1_{\th^2}$.) 
\par Raising the resulting inequality from \eqref{eqn:usefulineq} to the power $\tau$ and integrating, we get 
\begin{equation}\label{eqn:podonok}
\int_X|Tf|^{2\tau}d\mu\le\ga^{\tau}\int_X|T(S_\th f)|^{\tau}d\mu.
\end{equation}
On the other hand, we know by assumption that 
$$\int_X|Tg|^{\tau}d\mu\le M^{\tau}\|g\|_\si^\tau,\qquad g\in K^{\si}_\th,$$
with some fixed $M>0$. Applying this to $g=S_\th f$ gives
\begin{equation}\label{eqn:ublyudok}
\int_X|T(S_\th f)|^{\tau}d\mu\le M^{\tau}\|S_\th f\|_\si^\tau,
\end{equation}
and we now combine \eqref{eqn:podonok} with \eqref{eqn:ublyudok} to obtain 
$$\int_X|Tf|^{2\tau}d\mu\le(M\ga)^{\tau}\|S_\th f\|_\si^\tau.$$
In conjunction with \eqref{eqn:sigmakish}, this last estimate yields
$$\int_X|Tf|^{2\tau}d\mu\le(CM\ga)^{\tau}\|f\|_{2\si}^{2\tau},$$
proving our claim that the operator 
\begin{equation}\label{eqn:poganets}
T:\,K^p_\th\to L^q(\mu) 
\end{equation}
is bounded when $p=2\si$ and $q=2\tau$. 

\par Iterating the above argument, we arrive at a similar boundedness result for the operator \eqref{eqn:poganets} whenever $p=2^n\si(=:p_n)$ and $q=2^n\tau(=:q_n)$ for some integer $n\ge0$. The remaining cases can now be proved by interpolation. Indeed, for $1<p<\infty$, the operator $P_\th$ defined by
$$P_\th h:=P_+h-\th P_+(\ov\th h),\qquad h\in L^p,$$
where $P_+:L^p\to H^p$ is the Riesz projection (see \cite[Chapter III]{G}), is bounded on $L^p$. Moreover, $P_\th$ is a bounded projection from $L^p$ onto $K^p_\th$. Consequently, the (already established) boundedness property of the map \eqref{eqn:poganets} with $p=p_n$ and $q=q_n$ can be rephrased by saying that the sublinear operator $TP_\th:\,L^p\to L^q(\mu)$ is bounded for any such pair of exponents. The Riesz--Thorin convexity theorem, or rather its extension to sublinear operators due to Calder\'on and Zygmund (see \cite{CZ} or \cite{SW}), now guarantees that $TP_\th$ maps $L^p$ boundedly into $L^q(\mu)$ whenever the point $(1/p,1/q)$ in $\R^2$ belongs to the line segment that joins $(1/p_n,1/q_n)$ to $(1/p_{n+1},1/q_{n+1})$, for some (any) nonnegative integer $n$. In other words, $TP_\th$ is bounded as an operator from $L^p$ to $L^q(\mu)$ provided that the exponents involved satisfy $\si<p<\infty$ and $p/q=\si/\tau$. This, in turn, is equivalent to the desired conclusion. 
\end{proof}

\section{Extrapolation theorem: discussion}

In connection with our last theorem, a few comments and examples seem to be appropriate. 

\smallskip (1) First, we observe that Theorem \ref{thm:extrapol} would break down if the word \lq\lq solid" were omitted from its formulation. An example can be furnished as follows. Assume that $\th$ has infinitely many zeros, say $a_n$ ($n=1,2,\dots$), and take the (linear) differentiation operator $f\mapsto f'$ as $T$; finally, define the measure $\mu$ by 
$$d\mu(z)=(1-|z|)\,dA(z),\qquad z\in\D,$$
where $A$ is area measure on $\D$. With this choice of the main players, the operator \eqref{eqn:poganets} becomes bounded for $p=q=2$, but no such thing is true for $p=q=3$. Indeed, on the one hand, the classical Littlewood--Paley inequality 
$$\int_\D |f'(z)|^2(1-|z|)\,dA(z)\le C\|f\|_2^2$$
holds, with an absolute constant $C>0$, for all $f\in H^2$ (see, e.g., \cite{DynEnc} or \cite{G}) and hence for all $f\in K^2_\th$. On the other hand, let $f_n(z):=(1-\ov a_nz)^{-1}$ and note that $f_n\in K^3_\th$; a straightforward computation then shows that the quantity 
$$\|f_n\|_3^{-3}\int_\D |f'_n(z)|^3(1-|z|)\,dA(z)$$
behaves like a constant times $(1-|a_n|)^{-1}$ and therefore blows up as $n\to\infty$. 

\smallskip (2) To see an example where Theorem \ref{thm:extrapol} does apply, suppose that $T$ is the identity (or inclusion) map, and $\mu$ a suitable measure on the closed disk. Precisely speaking, let $\mu$ be a finite Borel measure on $\D\cup\T$ such that the singular component of $\mu\big|_\T$ assigns no mass to the set of boundary singularities for $\th$. The values of $K^p_\th$ functions are then well defined $\mu$-almost everywhere, and the boundedness issue for the operator \eqref{eqn:poganets} amounts to asking whether $K^p_\th$ embeds (continuously) in $L^q(\mu)$. The problem of characterizing such measures $\mu$ for a given $\th$ was posed, initially for $p=q=2$, by Cohn \cite{Co} and has attracted quite a bit of attention. Among the many papers that treat it, chiefly in the \lq\lq diagonal" case where $p=q$, we mention \cite{A1, A2, DAJM, DCAG, DJFA98, TV} and \cite[pp.\,80--81]{L}. See also \cite{DSpb93, DCora}, where the off-diagonal case $p<q$ was discussed (for some special measures on $\T$) in connection with the multiplicative structure of holomorphic Lipschitz spaces. 
\par The identity operator is obviously solid (in particular, \eqref{eqn:niceone} holds with $\ga=1$), so Theorem \ref{thm:extrapol} is indeed applicable in this situation. Applying it with $\tau=\si$, we recover a result of Aleksandrov (namely, \cite[Theorem 1.5]{A1}): If $1<\si<p<\infty$ and if $K^\si_\th$ embeds in $L^\si(\mu)$, then $K^p_\th$ embeds in $L^p(\mu)$. Furthermore, we know from \cite{A1, A2} that the restriction $\si>1$ in the preceding statement cannot be replaced with $\si\ge1$. Consequently, our Theorem \ref{thm:extrapol} would also become false if the endpoint $\si=1$ were included.

\smallskip (3) Finally, we remark that Theorem \ref{thm:extrapol} applies to certain maximal operators $T$. To give an example, let us associate to each point $\ze\in\T$ a set $\Om_\ze\subset\D$ and define 
$$(Tf)(\ze):=\sup\{|f(z)|:\,z\in\Om_\ze\},\qquad f\in K^1_{\th^2},$$
so that $T$ is sublinear (but not linear) and solid. Setting $\mu=m$ or perhaps considering more general measures on $\T$, we may be curious about the boundedness properties of the operator \eqref{eqn:poganets} for various values of $p$ and $q$, as soon as the sets $\Om_\ze=\Om_\ze(\th)$ are chosen appropriately. A good choice would be one where the $\Om_\ze$'s are reasonably nice, $\Om_\ze$ touches the circle at $\ze$, and the order of contact is controlled in terms of the distance from $\ze$ to the boundary singularities of $\th$. 

\medskip

\end{document}